\documentclass[11pt,reqno]{amsart}
\usepackage{fullpage}
\usepackage{mathrsfs,amssymb,graphicx,verbatim,amsmath,amsfonts}
\usepackage{paralist}
\usepackage{esint}
\usepackage[breaklinks,pdfstartview=FitH]{hyperref}
\usepackage{upgreek}
\usepackage[mathscr]{euscript}
\usepackage{dutchcal}
\usepackage[T1]{fontenc}

\addtolength{\footskip}{17pt}


\usepackage{graphicx}
\usepackage{sidecap}
\usepackage[all]{xy}
\usepackage[font=small,labelfont=bf,labelsep=endash]{caption}
\usepackage{enumitem}

\renewcommand{\eta}{\upeta}

\renewcommand{\epsilon}{\upepsilon}
\newcommand{\MM}{\mathcal{M}}

\renewcommand{\chi}{\upchi}

\renewcommand{\le}{\leqslant}
\renewcommand{\ge}{\geqslant}

\renewcommand{\setminus}{\smallsetminus}
\renewcommand{\gamma}{\upgamma}
\renewcommand{\lambda}{\uplambda}
\renewcommand{\alpha}{\upalpha}
\renewcommand{\delta}{\updelta}
\renewcommand{\beta}{\upbeta}
\renewcommand{\omega}{\upomega}
\renewcommand{\nu}{\upnu}
\renewcommand{\mu}{\upmu}
\renewcommand{\psi}{\uppsi}
\renewcommand{\phi}{\upphi}
\renewcommand{\rho}{\uprho}
\renewcommand{\kappa}{\upkappa}

\renewcommand{\tau}{\uptau}


\makeatletter
\def\moverlay{\mathpalette\mov@rlay}
\def\mov@rlay#1#2{\leavevmode\vtop{%
   \baselineskip\z@skip \lineskiplimit-\maxdimen
   \ialign{\hfil$\m@th#1##$\hfil\cr#2\crcr}}}
\newcommand{\charfusion}[3][\mathord]{
    #1{\ifx#1\mathop\vphantom{#2}\fi
        \mathpalette\mov@rlay{#2\cr#3}
      }
    \ifx#1\mathop\expandafter\displaylimits\fi}
\makeatother

\newcommand{\ee}{\mathsf{e}}

\newcommand{\NN}{\mathcal{N}}

\newcommand{\n}{\{1,\ldots,n\}}

\newcommand{\sub}{\mathscr{C}}

\renewcommand{\d}{\delta}

\newcommand{\R}{\mathbb R}

\newtheorem{theorem}{Theorem}

\newtheorem{claim}[theorem]{Claim}

\theoremstyle{remark}
\newtheorem{remark}[theorem]{Remark}

\renewcommand{\pi}{\uppi}
\renewcommand{\zeta}{\upzeta}

\renewcommand{\subset}{\subseteq}

\renewcommand{\sigma}{\upsigma}

\newcommand{\N}{\mathbb N}

\newcommand{\eqdef}{\stackrel{\mathrm{def}}{=}}

\newcommand{\Lip}{\mathrm{Lip}}

\begin{document}

\title{A relation between finitary Lipschitz extension moduli}
\thanks{M.~M. was supported by the BSF. A.~N. was supported by the BSF, the Packard Foundation and the Simons Foundation. The research that is presented here was conducted under the auspices of the Simons Algorithms and Geometry (A\&G) Think Tank.}

\author{Manor Mendel}
\address{Mathematics \& Computer Science Department, Open University, 1 University Road, P.O. Box 808, Raanana 53537, ISRAEL.}
\email{mendelma@gmail.com}
\author{Assaf Naor}
\address{Mathematics Department\\ Princeton University\\ Fine Hall, Washington Road, Princeton, NJ 08544-1000, USA.}
\email{naor@math.princeton.edu}


\maketitle



This note contains a simple and elementary observation in response to~\cite{Bas17}. To explain it, we need to first briefly recall standard notation (introduced in~\cite{Mat90}) related to Lipschitz extension moduli.

Suppose that $(\MM,d_\MM)$ and $(\NN,d_\NN)$ are metric spaces and $\sub\subset \MM$. The Lipschitz constant of a mapping $\phi:\sub\to \NN$ is denoted $\|\phi\|_{\Lip(\sub;\NN)}$. Thus, $\|\phi\|_{\Lip(\sub;\NN)}\in [0,\infty]$ is the infimum over those $L\in [0,\infty]$ such that $d_\NN(\phi(x),\phi(y))\le Ld_\MM(x,y)$ for all $x,y\in \sub$.  Denote by $\ee(\MM,\sub;\NN)\in [1,\infty]$ the infimum over those $K\in [1,\infty]$ such that for every $\phi:\sub\to \NN$ with $\|\phi\|_{\Lip(\sub;\NN)}<\infty$ there exists $\Phi:\MM\to \NN$ that extends $\phi$, i.e., $\Phi(x)=\phi(x)$ for all $x\in \sub$,  and satisfies $\|\Phi\|_{\Lip(\MM;\NN)}\le K\|\phi\|_{\Lip(\sub;\NN)}$. Note that when $\NN$  is complete, $\NN$-valued Lipschitz functions on $\sub$ automatically extend to the closure of $\sub$ while preserving the Lipschitz constant, so one usually assumes here that $\sub$ is closed. Given $n\in \N$, the finitary modulus $\ee_n(\MM;\NN)$ is defined to be the supremum of $\ee(\MM,\sub;\NN)$ over all those subsets $\sub\subset \MM$ of cardinality at most $n$. Analogously, denote by $\ee^n(\MM;\NN)$ the supremum of $\ee(\sub\cup\{x_1,\ldots,x_n\},\sub;\NN)$ over all closed subsets $\sub\subset \MM$ and all $x_1,\ldots,x_n\in \MM\setminus \sub$.

The Lipschitz extension modulus $\ee_n(\MM;\NN)$ has been investigated extensively over the past several decades, though major fundamental questions about it remain open; a thorough description of what is known in this context appears in~\cite{NR17}. A variant of the modulus $\ee^n(\MM;\NN)$ (related to the stronger requirement that Lipschitz retractions exist) was studied in~\cite{Gru60,Lin64}. However, it seems that the quantity $\ee^n(\MM;\NN)$ did not receive further scrutiny in the literature, prior to the recent preprint~\cite{Bas17}. The purpose of this note is to derive the following simple upper bound on $\ee^n(\MM;\NN)$ in terms of $\ee_n(\MM;\NN)$, thus allowing one to use the available literature on $\ee_n(\MM;\NN)$ to bound $\ee^n(\MM;\NN)$, and in particular to improve some of the estimates in~\cite{Bas17}; see Remark~\ref{rem:quote} below. Many natural questions related to upper and lower bounds on  $\ee^n(\MM;\NN)$  remain open and warrant future investigation.

\begin{claim} \label{claim} $\ee^n(\MM;\NN)\le \ee_n(\MM;\NN)+2$ for every $n\in \N$ and every two metric spaces $(\MM,d_\MM),(\NN,d_\NN)$.
\end{claim}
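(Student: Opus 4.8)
The plan is to reduce the extension task governing $\ee^n$ to the purely finitary modulus $\ee_n$: for each of the $n$ new points I will substitute a near-optimal nearest neighbor inside $\sub$, extend just these finitely many constraints to all of $\MM$ using $\ee_n$, and then define the value at each new point to be the value of that global extension there. Fix a closed set $\sub\subset\MM$, points $x_1,\ldots,x_n\in\MM\setminus\sub$, and $\phi:\sub\to\NN$ with $\|\phi\|_{\Lip(\sub;\NN)}<\infty$. One may assume $\ee_n(\MM;\NN)<\infty$ (else the claim is vacuous) and, rescaling the metric of $\NN$, that $\|\phi\|_{\Lip(\sub;\NN)}=1$. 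Since $\sub$ is closed and $x_i\notin\sub$, we have $\dist(x_i,\sub)>0$, so for every $\e>0$ one can pick $y_i\in\sub$ with $d_\MM(x_i,y_i)\le(1+\e)\dist(x_i,\sub)$.

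Next I would apply the definition of $\ee_n$ to the set $\{y_1,\ldots,y_n\}$, which has cardinality at most $n$: as $\|\phi|_{\{y_1,\ldots,y_n\}}\|_{\Lip}\le1$, for every $\d>0$ there is a global extension $\Psi:\MM\to\NN$ of $\phi|_{\{y_1,\ldots,y_n\}}$ with $\|\Psi\|_{\Lip(\MM;\NN)}\le\ee_n(\MM;\NN)+\d$. Define $\Phi:\sub\cup\{x_1,\ldots,x_n\}\to\NN$ by $\Phi|_\sub=\phi$ and $\Phi(x_i)=\Psi(x_i)$; this is consistent because each $x_i$ lies outside $\sub$. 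The remaining work is to estimate $\|\Phi\|_{\Lip}$, which I would do by inspecting the three kinds of pairs.

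For two points of $\sub$ the inequality $d_\NN(\Phi(a),\Phi(b))\le d_\MM(a,b)$ is just $\|\phi\|_{\Lip}\le1$, and for two new points $x_i,x_j$ one has $d_\NN(\Phi(x_i),\Phi(x_j))=d_\NN(\Psi(x_i),\Psi(x_j))\le(\ee_n(\MM;\NN)+\d)d_\MM(x_i,x_j)$. The only informative case pairs a new point $x_i$ with $a\in\sub$; here $\Psi(y_i)=\phi(y_i)=\Phi(y_i)$, so the triangle inequality in $\NN$ gives
\[
d_\NN(\Phi(x_i),\Phi(a))\le(\ee_n(\MM;\NN)+\d)\,d_\MM(x_i,y_i)+d_\MM(y_i,a).
\]
Because $a\in\sub$ forces $\dist(x_i,\sub)\le d_\MM(x_i,a)$, the choice of $y_i$ yields $d_\MM(x_i,y_i)\le(1+\e)d_\MM(x_i,a)$ and then $d_\MM(y_i,a)\le d_\MM(x_i,y_i)+d_\MM(x_i,a)\le(2+\e)d_\MM(x_i,a)$; the decisive additive $2$ is produced precisely by this last triangle inequality. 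Substituting shows the ratio $d_\NN(\Phi(x_i),\Phi(a))/d_\MM(x_i,a)$ is at most $(\ee_n(\MM;\NN)+\d)(1+\e)+2+\e$.

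Putting the three cases together bounds $\|\Phi\|_{\Lip}$ by a quantity that tends to $\ee_n(\MM;\NN)+2$ as $\e,\d\to0$, so $\ee(\sub\cup\{x_1,\ldots,x_n\},\sub;\NN)\le\ee_n(\MM;\NN)+2$; taking the supremum over all admissible $\sub$ and $x_1,\ldots,x_n$ gives the claim. I expect the only real subtlety to be that a true nearest point in $\sub$ need not exist when $\MM$ is non-proper, which is exactly why I use an $\e$-approximate nearest neighbor (legitimate since $\dist(x_i,\sub)>0$) and pass to the limit; beyond that, the argument is two triangle inequalities plus one invocation of the definition of $\ee_n$.
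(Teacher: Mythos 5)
Your argument is correct and is essentially the paper's own proof: both substitute a $(1+\e)$-approximate nearest point $y_i\in\sub$ for each $x_i$, extend $\phi|_{\{y_1,\ldots,y_n\}}$ using the definition of $\ee_n$, set $\Phi(x_i)$ equal to the value of that extension at $x_i$, and extract the additive $2$ from the same two triangle inequalities in the mixed case. The only cosmetic differences are your normalization $\|\phi\|_{\Lip(\sub;\NN)}=1$ and your retaining the full global extension $\Psi:\MM\to\NN$ where the paper records only its restriction to $\{y_1,\ldots,y_n\}\cup\{x_1,\ldots,x_n\}$.
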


\begin{remark}\label{rem:quote} Fix an integer $n\ge 3$. By combining Claim~\ref{claim} with~\cite[Theorem~1.10]{LN05}, it follows that\footnote{We use throughout the following (standard) asymptotic notation. Given two quantities $Q,Q'>0$, the notations
$Q\lesssim Q'$ and $Q'\gtrsim Q$ mean that $Q\le \mathsf{K}Q'$ for some
universal constant $\mathsf{K}>0$. The notation $Q\asymp Q'$
stands for $(Q\lesssim Q') \wedge  (Q'\lesssim Q)$. If  we need to allow for dependence on certain parameters, we indicate this by subscripts. For example, in the presence of an auxiliary parameter $\psi$, the notation $Q\lesssim_\psi Q'$ means that $Q\le c(\psi)Q' $, where $c(\psi) >0$ is allowed to depend only on $\psi$, and similarly for the notations $Q\gtrsim_\psi Q'$ and $Q\asymp_\psi Q'$.}
$$
\ee^n(\MM;Z)\lesssim \frac{\log n}{ \log\log n}
$$
 for every metric space $\MM$ and every Banach space $Z$. This answers (for sufficiently large $n$) a question that was asked in~\cite[page~3]{Bas17}. By combining Claim~\ref{claim} with~\cite[Theorem~2.12]{MP84} it follows that
 $$
 \forall\, p\in (1,2],\qquad \ee^n(\ell_p;\ell_2)\lesssim_p (\log n)^{\frac{1}{p}-\frac12}.
 $$
 Also, by combining Claim~\ref{claim} with~\cite{JL84} it follows that $\ee^n(\ell_2;Z)\lesssim\sqrt{\log n}$ for every Banach space $Z$. A ``dual'' version of this estimate follows by combining Claim~\ref{claim} with~\cite[Theorem~1.12]{LN05}, which yields that $\ee^n(Z;\ell_2)\lesssim\sqrt{\log n}$, thus improving (for sufficiently large $n$) over the bound $\ee^n(\ell_2;Z)\le\sqrt{n+1}$ of~\cite[Theorem~1.2]{Bas17}; more generally, this implies that $\ee^n(\ell_p;Z)\lesssim_p (\log n)^{1/p}$ for every $p\in (1,2]$.
 \end{remark}

\begin{proof}[Proof of Claim~\ref{claim}] Fix $\d\in (0,1)$, a closed subset $\sub\subset \MM$, and $x_1,\ldots,x_n\in \MM\setminus \sub$.  Since $\sub$ is closed, we have $d_\MM(x_j,\sub)>0$ for all $j\in \N$. Hence, there exist $y_1,\ldots,y_n\in \sub$ such that
\begin{equation}\label{eq:all j}
\forall\, j\in \n,\qquad d_\MM(y_j,x_j)\le (1+\d)d_\MM(x_j,\sub).
\end{equation}

Suppose that $\phi:\sub\to \NN$ is a Lipschitz mapping. Denote
\begin{equation*}\label{eqLdef KL}
K\eqdef\ee_n(\MM;\NN)\qquad\mathrm{and}\qquad L\eqdef\|\phi\|_{\Lip(\sub;\NN)}.
\end{equation*}
There is $\Psi:\{y_1,\ldots,y_n\}\cup\{x_1,\ldots,x_n\}\to \NN$ such that $\Psi(y_j)=\phi(y_j)$ for all $j\in \n$ and
\begin{equation}\label{eq:psi lip}
\|\Psi\|_{\Lip(\{y_1,\ldots,y_n\}\cup\{x_1,\ldots,x_n\};\NN)}\le (1+\d)K\|\phi\|_{\Lip(\{y_1,\ldots,y_n\};\NN)}\le (1+\d)KL.
\end{equation}
Define $\Phi:\sub\cup\{x_1,\ldots,x_n\}\to \NN$ by setting
$$
\Phi(z)\eqdef \left\{\begin{array}{ll}\Psi(z)&\mathrm{if}\ z\in \{x_1,\ldots,x_n\},\\
\phi(z)&\mathrm{if}\ z\in \sub.\end{array}\right.
$$
By design, $\Phi$ extends both $\phi$ and $\Psi$. For every $z\in \sub\setminus \{y_1,\ldots,y_n\}$ and $j\in \n$ we have
\begin{align}
\nonumber d_\NN\big(\Phi(z),\Phi(x_j)\big)&\le d_\NN\big(\Phi(z),\Phi(y_j)\big)+d_\NN\big(\Phi(y_j), \Phi(x_k)\big)\\
\label{use 23}
&\le Ld_\MM(z,y_j)+(1+\d)KL d_\MM(x_j,y_j)\\ \label{use nearest} &\le
L\big(d_\MM(z,x_j)+d_\MM(x_j,y_j)\big)+(1+\d)^2KLd_\MM(x_j,\sub)\\\label{use nearest 2}&\le
L\big(d_\MM(z,x_j)+(1+\d)d_\MM(x_j,\sub)\big)+(1+\d)^2KLd_\MM(x_j,z)\\  \nonumber &\le
L\big(d_\MM(z,x_j)+(1+\d)d_\MM(x_j,z)\big)+(1+\d)^2KLd_\MM(x_j,z)  \\ &=\big(2+\d+(1+\d)^2K\big)Ld_\MM(z,x_j),\label{that's all}
\end{align}
where~\eqref{use 23} uses the definition of $L$ and~\eqref{eq:psi lip}, and both~\eqref{use nearest} and~\eqref{use nearest 2}   use~\eqref{eq:all j}. Since $\Phi$ extends both $\phi$ and $\Psi$, it is $L$-Lipschitz on $\sub$ and $(1+\d)KL$-Lipschitz on $\{x_1,\ldots,x_n\}$. Therefore, due to~\eqref{that's all} we have $\|\Phi\|_{\Lip(\sub\cup\{x_1,\ldots,x_n\};\NN)}\le (2+\d+(1+\d)^2K)L$. Hence $\ee^n(\MM;\NN)\le 2+\d+(1+\d)^2\ee_n(\MM;\NN)$ and the desired estimate follows by letting $\d\to 0$.
\end{proof}

\begin{remark} By~\cite[Theorem~1.1]{Bas17} we have $\ee^n(\MM,\NN)\le n+1$ for every $n\in \N$ and all pairs of metric spaces $(\MM,d_\MM), (\NN,d_\NN)$. At the same time, it could be the case that $\ee_n(\MM,\NN)=\infty$; this is so for example when $n=2$, $\MM=\R$ and $\NN=\{0,1\}$, because there is no nonconstant continuous function from $\R$ to $\{0,1\}$. Hence, in general one cannot reverse the assertion of Claim~\ref{claim} so as to obtain an estimate of the form $\ee_n(\MM,\NN)\le f(\ee^{g(n)}(\MM,\NN))$ for some $f:[1,\infty)\to [1,\infty)$ and $g:\N\to \N$. However, it would be worthwhile to obtain good asymptotic bounds on such $f$ and $g$ for meaningful subclasses of the possible metric spaces $(\MM,d_\MM), (\NN,d_\NN)$, e.g.~when $\NN$ is a Banach space.
\end{remark}

\bibliographystyle{abbrv}
\bibliography{from-from-to-to}

\begin{thebibliography}{1}

\bibitem{Bas17}
G.~Basso.
\newblock Lipschitz extensions to finitely many points.
\newblock Preprint available at \url{https://arxiv.org/abs/1707.06593v1}, 2017.

\bibitem{Gru60}
B.~Gr\"unbaum.
\newblock Some applications of expansion constants.
\newblock {\em Pacific J. Math.}, 10:193--201, 1960.

\bibitem{JL84}
W.~B. Johnson and J.~Lindenstrauss.
\newblock Extensions of {L}ipschitz mappings into a {H}ilbert space.
\newblock In {\em Conference in modern analysis and probability ({N}ew {H}aven,
  {C}onn., 1982)}, volume~26 of {\em Contemp. Math.}, pages 189--206. Amer.
  Math. Soc., Providence, RI, 1984.

\bibitem{LN05}
J.~R. Lee and A.~Naor.
\newblock Extending {L}ipschitz functions via random metric partitions.
\newblock {\em Invent. Math.}, 160(1):59--95, 2005.

\bibitem{Lin64}
J.~Lindenstrauss.
\newblock On nonlinear projections in {B}anach spaces.
\newblock {\em Michigan Math. J.}, 11:263--287, 1964.

\bibitem{MP84}
M.~B. Marcus and G.~Pisier.
\newblock Characterizations of almost surely continuous {$p$}-stable random
  {F}ourier series and strongly stationary processes.
\newblock {\em Acta Math.}, 152(3-4):245--301, 1984.

\bibitem{Mat90}
J.~Matou\v{s}ek.
\newblock Extension of {L}ipschitz mappings on metric trees.
\newblock {\em Comment. Math. Univ. Carolin.}, 31(1):99--104, 1990.

\bibitem{NR17}
A.~Naor and Y.~Rabani.
\newblock On {L}ipschitz extension from finite subsets.
\newblock {\em Israel J. Math.}, 219(1):115--161, 2017.

\end{thebibliography}

 \end{document}